\definecolor{webgreen}{rgb}{0,.5,0}
\definecolor{webbrown}{rgb}{.6,0,0}
\def\suchthat{\, : \, }
\def\Enn{\mathbb{N}}
\newenvironment{smallarray}[1]
{\null\,\vcenter\bgroup\scriptsize
\arraycolsep=.13885em
\hbox\bgroup$\array{@{}#1@{}}}
{\endarray$\egroup\egroup\,\null}
\begin{document}

\theoremstyle{plain}
\newtheorem{theorem}{Theorem}
\newtheorem{corollary}[theorem]{Corollary}
\newtheorem{lemma}[theorem]{Lemma}
\newtheorem{proposition}[theorem]{Proposition}

\theoremstyle{definition}
\newtheorem{definition}[theorem]{Definition}
\newtheorem{example}[theorem]{Example}
\newtheorem{conjecture}[theorem]{Conjecture}

\theoremstyle{remark}
\newtheorem{remark}[theorem]{Remark}

\title{Counterexamples to a Conjecture of Dombi in Additive Number Theory}

\author{
Jason P. Bell \\
Department of Pure Mathematics\\
and \\
Jeffrey Shallit\footnote{Research funded by a grant from NSERC, 2018-04118.} \\
School of Computer Science \\
\ \\
University of Waterloo \\
Waterloo, Ontario  N2L 3G1\\ Canada \\
\href{mailto:jpbell@uwaterloo.ca}{\tt jpbell@uwaterloo.ca} \\
\href{mailto:shallit@uwaterloo.ca}{\tt shallit@uwaterloo.ca}
}

\maketitle

\begin{abstract}
We disprove a 2002 conjecture of Dombi from additive number
theory.   More precisely, we find examples of sets $A \subset \Enn$
with the property that $\Enn \setminus A$ is infinite, but
the sequence 
$n \rightarrow |\{ (a,b,c) \suchthat n=a+b+c \text{ and }  a,b,c \in A \}|$,
counting the number of $3$-compositions using elements of $A$ only,
is strictly increasing.
\end{abstract}

\section{Introduction}
Let $\Enn = \{0,1,2,\ldots \}$ denote the natural numbers.
A $k$-{\it composition} of an integer $n \in \Enn$ is 
a $k$-tuple of natural numbers $(c_1, c_2, \ldots, c_k)$
such that $n = c_1 + c_2 + \cdots + c_k$.  In contrast
with partitions, the order of the summands
matters in a composition.

Let $A \subseteq \Enn$, and define
$ r(k, A, n)$ to be the number of $k$-compositions of $n$
where the summands are chosen from $A$.  The study of the
function $r$ was initiated by Erd\H{o}s and Tur\'an \cite{Erdos&Turan:1941},
who proved that if $A$ is infinite
then the sequence
$(r(2,A,n))_{n \geq 0}$ cannot be eventually constant.  Erd\H{o}s and 
his co-authors returned to the study of $r(k,A,n)$ in 
many papers; for example,
\cite{Erdos:1956,Erdos&Sarkozy:1985,Erdos&Sarkozy:1986,Erdos&Sarkozy&Sos:1986,Erdos&Sarkozy&Sos:1987,Sarkozy:2006} just to name a few.

In a 1985 paper \cite{Erdos&Sarkozy&Sos:1985}, Erd\H{o}s, S\'ark{\"o}zy, and
S{\'o}s proved that $(r(2,A,n))_{n \geq 0}$ is eventually increasing\footnote{By
``increasing'' in this paper we mean ``(not necessarily strictly) increasing".}
if and only if $A$ is co-finite; that is, if $A$ omits only finitely many
integers.  Their original proof was rather complicated and a considerable
simplification was later found by Balasubramanian \cite{Balasubramanian:1987}.

This result on $r(2,A,n)$ prompted Dombi \cite{Dombi:2002} to study
$r(k,A,n)$ for larger $k$.
He conjectured that
there is no set $A \subset \Enn$ with $A$ co-infinite
and $(r(3,A,n))_{n\geq 0}$ eventually increasing.   In this note
we refute Dombi's conjecture by constructing examples
of co-infinite sets $A$
for which $(r(3,A,n))_{n \geq 0}$ is eventually increasing;
even eventually {\it strictly\/} increasing.
We can also give explicit examples where $(r(3,A,n))_{n \geq 0}$ is strictly
increasing right from the start.

A novelty in our approach for constructing explicit
examples is the use of automatic sequences
and the {\tt Walnut} theorem-prover.   For other applications
of these ideas to additive number theory, see
\cite{Bell&Hare&Shallit:2018,Rajasekaran&Shallit&Smith:2020,Bell&Lidbetter&Shallit:2020,Allouche&Shallit:2022}.

\section{A general construction}

Let $A$ be an infinite subset of $\Enn$.  There are two other ways to view $A$, both useful.  We can
consider its characteristic sequence $(a(n))_{n \geq 0}$, where
$a(n)$ is defined
to be $1$ if $n \in A$ and $0$ otherwise.  We can also consider
its associated formal power series $A(X)$, defined
by $\sum_{n \geq 0} a(n) X^n$.  

Then it is easy to see that for integers $k \geq 1$ we have
$$ \sum_{i \geq 0} r(k, A, i) X^i = A(X)^k .$$
Thus, the claim that $(r(k,A,n))_{n \geq 0}$ is eventually
strictly increasing is equivalent
to the claim that
$(1-X) A(X)^k$ has coefficients that are eventually positive.\footnote{To be clear, by ``positive'' we mean strictly greater than $0$.}

It is easy to see that if $(r(k,A,n))_{n \geq 0}$ is strictly
increasing right from the start, then $0 \in A$.   Thus, the
claim that $(r(k,A,n))_{n \geq 0}$ is strictly increasing right
from the start is equivalent to the claim that
$(1-X) A(X)^k$ has all positive coefficients.

Here is our first main result.  We employ the usual asymptotic
notation, where $f = O(g)$ means $f(n) \leq Cg(n)$ for some $C>0$ and
all sufficiently large $n$.   Also $f = \Theta(g)$ means $f = O(g)$ and
$g = O(f)$.   Finally, $f = o(g)$ means $\lim_{n \rightarrow \infty}
f(n)/g(n) = 0$.

\begin{theorem}
Let $k\geq 3$ be an integer.
Let $F \subseteq \Enn$ and assume $0 \not\in F$.
Let $(f(n))_{n \geq 0}$ be its associated
characteristic sequence and $F(X)$ its associated
power series $\sum_{i \geq 0} f(i) X^i$.
Define $f'(n) = \sum_{0 \leq i \leq n} f(i)$.
Suppose $f'(n) = o(n^{\alpha})$ for some $\alpha < (k-2)/k$
and $A = \Enn \setminus F$.   
Then $(r(k,A,n))_{n \geq 0}$ 
is eventually strictly increasing.
\label{main1}
\end{theorem}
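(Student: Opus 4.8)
The plan is to pass to power series. Since $a(n) = 1 - f(n)$ for every $n \ge 0$, and since $0 \not\in F$ forces $0 \in A$, we have the identity $A(X) = \frac{1}{1-X} - F(X)$. By the remarks preceding the theorem it is enough to prove that $(1-X)A(X)^k$ has eventually positive coefficients. I would expand by the binomial theorem:
$$ (1-X)A(X)^k \;=\; (1-X)\Bigl(\tfrac{1}{1-X} - F(X)\Bigr)^{k} \;=\; \sum_{j=0}^{k} \binom{k}{j} (-1)^j \, \frac{F(X)^j}{(1-X)^{k-1-j}} . $$
The term $j = 0$ equals $(1-X)^{-(k-1)} = \sum_{n \ge 0} \binom{n+k-2}{k-2} X^n$, whose coefficient of $X^n$ is a polynomial in $n$ of degree $k - 2 \ge 1$, asymptotic to $n^{k-2}/(k-2)!$. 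The strategy is to show that each of the remaining $k$ terms contributes only $o(n^{k-2})$ to the coefficient of $X^n$; then that coefficient equals $\frac{1}{(k-2)!} n^{k-2}(1 + o(1))$, hence is positive for all large $n$, which is what we want.

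For the estimates, the one crucial combinatorial input is the bound $[X^m] F(X)^j \le \bigl(f'(m)\bigr)^{j-1}$, valid for all $j \ge 1$: a representation $m = i_1 + \cdots + i_j$ with all $i_\ell \in F$ is determined by $(i_1,\dots,i_{j-1})$, and each $i_\ell$ lies in $F \cap [0,m]$, a set of size $f'(m)$. For $j = 1$ one instead uses that $F(X)$ has at most $f'(n)$ nonzero coefficients of degree $\le n$. Combining these with the monotonicity of $f'$ and the hockey-stick identity $\sum_{m=0}^{n}\binom{m+d}{d} = \binom{n+d+1}{d+1}$, a direct computation bounds the absolute value of the coefficient of $X^n$ in the $j$-th term, up to a constant, by $f'(n)\,n^{k-3}$ when $j=1$, by $\bigl(f'(n)\bigr)^{j-1} n^{k-1-j}$ when $2 \le j \le k-1$, and by $\bigl(f'(n)\bigr)^{k-1}$ when $j = k$. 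Under the hypothesis $f'(n) = o(n^{\alpha})$ these are $o(n^{\alpha + k - 3})$, $o(n^{\alpha(j-1) + k-1-j})$, and $o(n^{\alpha(k-1)})$ respectively; for $1 \le j \le k-1$ each exponent is below $k-2$ whenever $\alpha < 1$, while for $j = k$ this requires $\alpha < (k-2)/(k-1)$. Since $(k-2)/k < (k-2)/(k-1)$, the assumption $\alpha < (k-2)/k$ makes all $k$ error terms $o(n^{k-2})$, completing the proof.

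The step I expect to be the main obstacle is not any single hard inequality but keeping the bookkeeping honest across the cases. In particular, for $j = 1$ one must retain the factor $(1-X)^{-(k-2)}$ rather than absorbing it into $(1-X)^{-(k-1)}$: the sparsity of $F$ is precisely what saves the $j=1$ term, and a careless bound there produces a spurious $n^{k-2+\alpha}$ contribution that would swamp the main term. One must also treat separately the regimes $j \le k-1$, where $(1-X)^{-(k-1-j)}$ is a genuine power series to be convolved with $F(X)^j$, and $j = k$, where $(1-X)^{-(k-1-j)} = 1-X$ is a polynomial. Beyond these two points the argument is routine binomial-coefficient asymptotics; note also that the method is visibly lossy, since the dominant error comes from the single term $j=k$, so the true threshold produced by this approach is $(k-2)/(k-1)$ rather than the cleaner bound $(k-2)/k$ stated in the theorem.
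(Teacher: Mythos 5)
Your proposal is correct and follows essentially the same route as the paper: the identical binomial expansion of $(1-X)\bigl(\tfrac{1}{1-X}-F(X)\bigr)^{k}$, with the $j=0$ term $\Theta(n^{k-2})$ dominating the remaining $k$ terms. The only real difference is that your counting bound $[X^m]F(X)^j \le (f'(m))^{j-1}$ is slightly sharper than the paper's $O(n^{j\alpha})$-type estimates for the $F(X)^{k-1}$ and $(1-X)F(X)^k$ terms, which is why you correctly observe that the argument actually works for any $\alpha < (k-2)/(k-1)$, whereas the paper's cruder bookkeeping lands exactly on the stated threshold $(k-2)/k$.
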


\begin{proof}
Let $A(X) = \sum_{i \geq 0} a(i) X^i$.  It suffices to show that
the coefficients of $(1-X) A(X)^k$ are eventually positive. Now
by the binomial theorem we have
\begin{align}
(1-X) A(X)^k &= (1-X) \left( {1 \over {1-X}} - F(X) \right)^k \nonumber \\
&= (1-X) \left( \sum_{0 \leq i \leq k} (-1)^i {k \choose i} {1\over{(1-X)^{k-i}}} F(X)^i \right) \nonumber \\
&= {1 \over {(1-X)^{k-1}}} + \sum_{1 \leq i \leq k-2} (-1)^i {k \choose i} 
{1 \over {(1-X)^{k-i-1}}} F(X)^i   \nonumber \\
&\quad \quad + (-1)^{k-1} k F(X)^{k-1} + (-1)^k (1-X) F(X)^k.  
\label{maineq}
\end{align}
Now, examining the coefficient of $X^n$ of both sides for $n \geq 0$,
we see that
on the left it is $r(k,A,n)-r(k,A,n-1)$.  (Here by convention $r(k,A,-1) = 0$.)
On the other hand, the coefficient of $X^n$ in
\begin{itemize}
\item $1/(1-X)^{k-1}$ is ${{n+k-2} \choose {k-2}} =
\Theta(n^{k-2})$,
\item $ {k \choose i} {1\over{(1-X)^{k-i-1}}} F(X)^i $
is $O(n^{k-i-2 + i\alpha})$ for $1 \leq i \leq k-2$,
\item $ k F(X)^{k-1}$ is $O(n^{(k-1)\alpha})$, 
\item $(1-X) F(X)^k$ is $O(n^{k\alpha})$.
\end{itemize}
It is now easy to see that the $\Theta(n^{k-2})$ term dominates the
remaining terms, since $\alpha < (k-2)/k$ by hypothesis.
Hence the coefficient of $X^n$ on the right-hand side is eventually
positive, and the result is proved.
\end{proof}

\section{Results for automatic sets}

In some cases we can find a good exact formula for the difference sequence
$$d(n) := r(k,A,n) - r(k,A,n-1).$$
Here by ``good'' we mean that one
can compute the $n$'th term in time polynomial in $\log n$.
In particular, this is possible if $A$ is a $b$-automatic set.
We now explain how this can be done.

Recall that a set $A$ is $b$-automatic if its characteristic sequence
$(a(n))_{n \geq 0}$ can be computed by a DFAO (deterministic
finite automaton with output) reading
the base-$b$ representation of $n$ as input.
Then we can use
known enumeration techniques to show that $(d(n))_{n \geq 0}$ is a
$b$-regular sequence
\cite{Allouche&Shallit:1992,Allouche&Shallit:2003,Allouche&Shallit:2003a}
and even compute a linear representation for it.
By a {\it linear representation\/} for a sequence $x(n)$ we mean
a triple $(v, \gamma, w)$ consisting of
a row vector $v$, a column vector $w$, and a matrix-valued morphism
$\gamma$ with domain $\Sigma_b = \{0,1,\ldots, b-1 \}$ such that
$$ x(n) = v \gamma(z) w$$
for all $z \in \Sigma_b^*$ such that $[z]_b = n$.
(Here $[z]_b$ is the integer represented by the string $z$,
represented in base $b$, with most significant digit at the left.)
This gives us our desired good formula.  

More precisely, since we can write a first-order logical formula
specifying that $(x_1, x_2, \ldots, x_k)$ is a $k$-composition of
elements of $A$, we can obtain a linear representation for
$r(k,A,n)$ from the DFAO for $A$.   Then, using a simple construction
based on block matrices, as in \cite[p.~14]{Berstel&Reutenauer:2011}
we can find a linear representation for the
difference $d(n)$.   This can be carried out
using the {\tt Walnut}
software system, originally designed by Hamoon Mousavi \cite{Mousavi:2016},
and available for free at\\
\centerline{\url{https://cs.uwaterloo.ca/~shallit/walnut.html} .}\\
See \cite{Shallit:2022} for more details.

The {\it rank\/} of a linear representation is the dimension of
the vector $v$.
Linear representations can be minimized (and thus we can find
a linear representation of minimum rank) using an algorithm
of Sch\"utzenberger, as described in \cite[Chap.~2]{Berstel&Reutenauer:2011}.
We used an implementation written in {\tt Maple}.

If a $b$-regular sequence takes only finitely many values, then it is
in fact automatic, and an automaton for it can be computed explicitly
in terms of its linear representation (using the so-called
``semigroup trick''; see \cite[\S 4.11]{Shallit:2022}).

Let $F \subseteq \Enn$ be a $b$-automatic set, let
$(f(n))_{n \geq 0}$ be its characteristic sequence, and
define $f'(n) = \sum_{0 \leq i \leq n} f(i)$.
By the classification of automatic sequences in \cite[Lemmas 2.1--2.3]{Bell&Hare&Shallit:2018} we know that either $f'(n) = O((\log n)^c)$ for some
constant $c$, or $f'(n) \geq n^d$ infinitely often, for some constant $d > 0$.
Furthermore, given a DFAO computing $(f(n))_{n \geq 0}$, it is decidable
which of the two alternatives hold.   If $f'(n) = O((\log n)^c)$, then
we know from Theorem~\ref{main1} that $(r(k,A,n))_{n \geq 0}$
is eventually strictly increasing for $k \geq 3$, where $A = \Enn\setminus F$.

\section{An explicit example}
Now we turn to our particular example.
Choose the ``forbidden set'' $F$ to be 
$$\{ 2^{n+2} - 1 \suchthat n \geq 0 \} = \{ 3,7,15, 31, \ldots \},$$
and define $A = \Enn \setminus F$.  By Theorem~\ref{main1}
we know $(r(k,A,n))_{n \geq 0}$ is eventually strictly increasing
for $k \geq 3$.  We now show it is strictly increasing right from the
start, and find an explicit formula for the difference
$r(k,A,n)-r(k,A,n-1)$.

It follows from Eq.~\eqref{maineq} for $k = 3$ that
$$ D(X) = \sum_{i \geq 0} d(i)X^i = \sum_{i \geq 0} (i+1) X^i - 3 (1 + \sum_{i \geq 0} (\lfloor \log_2 (i+1) \rfloor - 1) X^i) + 3 F(X)^2 - (1-X) F(X)^3 .$$
Hence, by considering the coefficient of $X^n$ on both sides, we have
$$d(n) = (n+1)  - 3 \lfloor \log_2 (n+1) \rfloor + 3 + 3 g(n) - 
(h(n) - h(n-1)),$$
for $n \geq 1$
where $g(n) = [X^n]F(X)^2$ and $h(n) = [X^n]F(X)^3$.

Next we show that $e(n) := d(n) - (n+1) + 3 \lfloor \log_2 (n+1) \rfloor$
is $2$-automatic and find an explicit automaton for it.
We start with the following {\tt Walnut} commands, which can
be typed in verbatim into {\tt Walnut} (without the line numbers):
\begin{verbatim}
(1) reg f msd_2 "0*(11)1*":
(2) reg power2 msd_2 "0*10*":
(3) def log2 "$power2(x) & 1<x & x<=n":
(4) def a3n n "n=x+y+z & ~$f(x) & ~$f(y) & ~$f(z)":
(5) def a3n1 n "n=x+y+z+1 & ~$f(x) & ~$f(y) & ~$f(z)":
(6) def np1 n "x<=n":
(7) def log2n1 n "$log2(n+1,x)":
\end{verbatim}

The explanation for these commands is as follows:  
\begin{itemize}
\item[(1)] asserts that $n$ has a base-$2$ expansion consisting
of two $1$'s followed by any number of $1$'s, allowing leading zeros.
Hence $f(n)$ is true iff $n \in F$.

\item[(2)] asserts that ${\tt power2} (n)$ is true iff $n$ is a power of
$2$.

\item[(3)] asserts that $1<x\leq n$ and $x$ is a power of $2$.

\item[(4)] asserts that $n=x+y+z$ and each of $x,y,z$ belongs to
$\Enn \setminus F$.  It computes a linear representation for the
number of such compositions, which turns out to have rank 100.

\item[(5)] asserts that $n-1=x+y+z$ and each of $x,y,z$ belongs to
$\Enn \setminus F$.  It computes a linear representation for the
number of such compositions, which turns out to have rank 81.

\item[(6)] asserts that $x \leq n$.  It computes a linear representation
for the number of such $x$, which is $n+1$.  It turns out to have rank $2$.

\item[(7)] asserts that $1<x\leq n+1$ and $x$ is a power of $2$; it
computes the number of such $x$, which is $\lfloor \log_2(n+1) \rfloor$.
It turns out to have rank $6$.
\end{itemize}

From the linear representations above in parts (4)--(7),
we can then use a simple block matrix construction to compute the linear
representation 
for $e(n)$.  It has a linear representation of rank $100+81+2+6= 189$.
Now we can minimize it, getting a linear representation
$(v,\gamma,w)$ of rank 10,
as follows:
\begin{align*}
v^T &= \left[\begin{smallarray}{c}
1\\
 0\\
 0\\
 0\\
 0\\
 0\\
 0\\
 0\\
 0\\
 0
\end{smallarray}\right] & \quad
\gamma(0) &= {1 \over {14}} \left[\begin{smallarray}{cccccccccc}
  14&  0&  0&  0&  0&  0&  0&  0&  0&  0\\
   0&  0& 14&  0&  0&  0&  0&  0&  0&  0\\
   0&  0&  0&  0& 14&  0&  0&  0&  0&  0\\
   0&  0&  0&  0&  0&  0& 14&  0&  0&  0\\
   0&  0&  0&  0&  0&  0&  0&  0& 14&  0\\
   0&  0& 24&-32&-19& 24& 30&  0& -5& -8\\
   0&  0&-18& 24&-12&-18&  2&  0& 30&  6\\
   0&  0& 30&-26&-36& 30& 20& 14&  6&-24\\
   0&  0&  0&  0&  0&  0&  0&  0& 14&  0\\
   0&  0& 36&-48&-39& 36& 38&  0&  3&-12\\
\end{smallarray} \right] \\
\gamma(1) &= {1 \over {14}} \left[\begin{smallarray}{cccccccccc}
   0& 14&  0&  0&  0&  0&  0&  0&  0&  0\\
   0&  0&  0& 14&  0&  0&  0&  0&  0&  0\\
   0&  0&  0&  0&  0& 14&  0&  0&  0&  0\\
   0&  0&  0&  0&  0&  0&  0& 14&  0&  0\\
   0&  0&  0&  0&  0&  0&  0&  0&  0& 14\\
   0&  0&  0&-14&  0& 14&  0& 14&  0&  0\\
   0&  0& -6& 22&  3&-48& -4& 14&  3& 30\\
   0&  0&  0&  0&  0&  0&  0& 14&  0&  0\\
   0&  0&-12& 16&  6&-12& -8&  0&  6& 18\\
   0&  0&  6&-22& -3&  6&  4& 14& -3& 12\\
\end{smallarray} \right] \quad &
w &= \left[\begin{smallarray}{c}
0\\
 3\\
 3\\
 3\\
 3\\
 3\\
 6\\
 3\\
 3\\
 2
\end{smallarray}\right].
\end{align*}

When we run the semigroup trick on this linear representation, we get a DFAO
of $33$ states.  This DFAO can then be minimized, giving us the $28$-state DFAO
described in Table~\ref{enaut}.  Here $\delta(q,i)$ gives
the automaton's transition from state $q$ on input $i$,
and $\tau(q)$ gives the output associated with state $q$.

It remains to see $d(n) > 0$ for all $n$.   It is easy to see that
$0 \leq g(n) \leq 2$ and $0 \leq h(n) \leq 6$ for all $n$.
Hence for $n \geq 1$ we have
$ d(n) \leq n+1 - 3 \lfloor \log_2 (n+1) \rfloor - 3,$
which by standard estimates is positive for $n \geq 12$.
For $0 \leq n < 12$ we can check from Table~\ref{one} that $d(n) > 0$.
\begin{table}[H]
\begin{center}
\begin{tabular}{c|cccccccccccccccccccccc}
$n$ &        0& 1& 2& 3& 4& 5& 6& 7& 8& 9&10&11&12&13&14&15&16&17 \\
\hline
$r(3,A,n)$ & 1&  3&  6&  7&  9& 12& 19& 21& 24& 27& 39& 45& 52& 57& 72& 79& 87& 93 \\
\hline
$d(n)$ & 1 & 2& 3& 1& 2& 3& 7& 2& 3& 3&12& 6& 7& 5&15& 7& 8& 6\\
\end{tabular}
\end{center}
\caption{First few values of $r(3,A,n)$ and $d(n)$.}
\label{one}
\end{table}

We can sum up these calculations as follows:
\begin{theorem}
Let $F = \{ 2^{n+2} -1 \suchthat n \geq 0 \}$ and define
$A = \Enn \setminus F$.  Then the sequence
$(r(3,A,n))_{n \geq 0}$ is strictly increasing from the start,
and the difference
$d(n) = r(3,A,n)-r(3,A,n-1)$
can be calculated in time polynomial in $\log_2 n$
using the formula
$$d(n) = n+1 - 3\lfloor \log_2 (n+1) \rfloor + e(n),$$
where $e(n)$ is the sequence computed by the automaton
in Table~\ref{enaut}.
\label{main2}
\end{theorem}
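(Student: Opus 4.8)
The plan is to assemble the ingredients developed in the preceding section into a single statement. From Eq.~\eqref{maineq} with $k = 3$ and extraction of the coefficient of $X^n$ we already have, for $n \ge 1$,
$$ d(n) \;=\; n+1 - 3\lfloor\log_2(n+1)\rfloor + 3 + 3g(n) - \bigl(h(n)-h(n-1)\bigr), $$
where $g(n) = [X^n]F(X)^2$ and $h(n) = [X^n]F(X)^3$; that is, $d(n) = n+1 - 3\lfloor\log_2(n+1)\rfloor + e(n)$ with $e(n) = 3 + 3g(n) - \bigl(h(n)-h(n-1)\bigr)$. The {\tt Walnut} computation described above --- building linear representations for $r(3,A,n)$, $r(3,A,n-1)$, $n+1$, and $\lfloor\log_2(n+1)\rfloor$ from the DFAO for $F$ (commands (4)--(7), of ranks $100,81,2,6$), combining them by a block-matrix construction (as in \cite[p.~14]{Berstel&Reutenauer:2011}) into a rank-$189$ linear representation for $e(n)$, minimizing it to the rank-$10$ representation $(v,\gamma,w)$, and then running the semigroup trick and DFA minimization --- produces the $28$-state automaton of Table~\ref{enaut} that computes $e(n)$. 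This step is legitimate because $e$ is $2$-regular and, as noted below, bounded, hence $2$-automatic, so the semigroup trick applies. Since $n+1$ and $\lfloor\log_2(n+1)\rfloor$ are computable by inspection and $e(n)$ by running the automaton on the base-$2$ expansion of $n$, the displayed formula establishes that $d(n)$ can be computed in time polynomial in $\log_2 n$.

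It remains to prove $(r(3,A,n))_{n\ge0}$ is strictly increasing from the start, i.e.\ $d(n) \ge 1$ for all $n \ge 1$ (note $r(3,A,0) = 1$ since $0 \in A$). I would first record the elementary bounds $0 \le g(n) \le 2$ and $0 \le h(n) \le 6$, which follow from the geometric growth of the elements of $F$ by a short case analysis on the ways of writing $n+2$ as an (ordered) sum of two or three powers of $2$. Since $g(n) \ge 0$ and $h(n) - h(n-1) \le h(n) \le 6$, the formula gives, for $n \ge 1$,
$$ d(n) \;\ge\; n+1 - 3\lfloor\log_2(n+1)\rfloor - 3. $$
An elementary estimate --- it suffices to check the values at $n+1 = 2^k$ and use that $\lfloor\log_2(n+1)\rfloor$ stays constant while $n+1$ increases between consecutive powers of $2$ --- shows the right-hand side is $\ge 1$ for every $n \ge 12$. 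For the finitely many remaining indices $0 \le n \le 11$, Table~\ref{one} exhibits $d(n) \ge 1$ directly, and the proof is complete.

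I expect the real obstacle to be computational rather than conceptual: obtaining the rank-$189$ linear representation for $e(n)$ and reliably minimizing it to rank $10$, then executing the semigroup trick and DFA minimization, all rest on the correctness of the {\tt Walnut} output for commands (1)--(7) and of the external (Sch\"utzenberger) minimization routine. Once those are verified, the boundedness of $g$ and $h$ and the large-$n$ estimate are routine. A minor bookkeeping point that still needs care is the off-by-one at $n = 0$ (where $\sum_{0\le i\le 0}f(i) = 0$ but $\lfloor\log_2 1\rfloor - 1 = -1$), which is why the closed formula and the positivity argument are phrased for $n \ge 1$, with $n = 0$ and the other small indices handled via Table~\ref{one}.
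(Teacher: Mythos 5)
Your proposal is correct and follows essentially the same route as the paper: extract the coefficient formula from Eq.~\eqref{maineq} with $k=3$, obtain the automaton for $e(n)$ via the {\tt Walnut} linear representations, minimization, and the semigroup trick, then bound $g$ and $h$ to get $d(n)\geq n+1-3\lfloor\log_2(n+1)\rfloor-3>0$ for $n\geq 12$ and check $n\leq 11$ from Table~\ref{one}. (You have in fact corrected a sign slip in the paper's text, which writes this inequality with ``$\leq$'' where ``$\geq$'' is intended.)
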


\begin{remark}
It is easy to see that if $(r(k,A,n))_{n \geq 0}$ is strictly
increasing from the start, then this is also true of
$(r(k',A,n))_{n \geq 0}$ for $k' \geq k$.   So our example
of Theorem~\ref{main2} also works for all $k \geq 3$; in particular,
for $k = 4$.  Dombi \cite{Dombi:2002}
conjectured there were examples of eventually strictly
increasing sequences for $k = 4$, but was not able to prove
the existence of one.  
\end{remark}

\begin{table}[H]
\begin{center}
\begin{minipage}{3in}
\begin{center}
\begin{tabular}{c|c|c|c}
$q$ & $\delta(q,0)$ & $\delta(q,1)$ & $\tau(q)$ \\
\hline
  0& 0& 1& 0\\
  1& 2& 3& 3\\
  2& 4& 5& 3\\
  3& 6& 7& 3\\
  4& 8& 9& 3\\
  5&10&11& 3\\
  6&12&13& 6\\
  7&14& 7& 3\\
  8& 8&15& 3\\
  9&16&17& 2\\
 10&12&18&10\\
 11&19&11& 3\\
 12&12&20& 3\\
 13&21&22& 0\\
 \end{tabular}
 \end{center}
 \end{minipage}
 \begin{minipage}{3in}
 \begin{center}
 \begin{tabular}{c|c|c|c}
$q$ & $\delta(q,0)$ & $\delta(q,1)$ & $\tau(q)$ \\
\hline
 14&23&24& 9\\
 15&16&17& 0\\
 16&12&20&12\\
 17&25&17& 3\\
 18&21&22&$-1$\\
 19&23&26&13\\
 20&21&22&$-3$\\
 21&23&27& 9\\
 22&21&22& 3\\
 23&23&23& 3\\
 24&23&23& 0\\
 25&23&27&15\\
 26&23&23&$-1$\\
 27&23&23&$-3$
\end{tabular}
\end{center}
\end{minipage}
\end{center}
\caption{Automaton for $e(n)$.}
\label{enaut}
\end{table}


\begin{thebibliography}{10}

\bibitem{Allouche&Shallit:1992}
J.-P. Allouche and J.~O. Shallit.
\newblock The ring of $k$-regular sequences.
\newblock {\em Theoret. Comput. Sci.} {\bf 98} (1992), 163--197.

\bibitem{Allouche&Shallit:2003}
J.-P. Allouche and J.~Shallit.
\newblock {\em Automatic Sequences: Theory, Applications, Generalizations}.
\newblock Cambridge University Press, 2003.

\bibitem{Allouche&Shallit:2003a}
J.-P. Allouche and J.~O. Shallit.
\newblock The ring of $k$-regular sequences, {II}.
\newblock {\em Theoret. Comput. Sci.} {\bf 307} (2003), 3--29.

\bibitem{Allouche&Shallit:2022}
J.-P. Allouche and J.~Shallit.
\newblock Additive properties of the evil and odious numbers and similar
  sequences.
\newblock Arxiv preprint arXiv:2112.13627 [math.NT]. Available at
  \url{https://arxiv.org/abs/2112.13627}, 2022.

\bibitem{Balasubramanian:1987}
R.~Balasubramanian.
\newblock A note on a result of {Erd\H{o}s}, {S\'ark\"ozy} and {S\'os}.
\newblock {\em Acta Arith.} {\bf 49} (1987), 45--53.

\bibitem{Bell&Hare&Shallit:2018}
J.~Bell, K.~Hare, and J.~Shallit.
\newblock When is an automatic set an additive basis?
\newblock {\em Proc. Amer. Math. Soc. Ser. B} {\bf 5} (2018), 50--63.

\bibitem{Bell&Lidbetter&Shallit:2020}
J.~Bell, T.~F. Lidbetter, and J.~Shallit.
\newblock Additive number theory via approximation by regular languages.
\newblock {\em Internat. J. Found. Comp. Sci.} {\bf 31} (2020), 667--687.

\bibitem{Berstel&Reutenauer:2011}
J.~Berstel and C.~Reutenauer.
\newblock {\em Noncommutative Rational Series With Applications}, Vol. 137 of
  {\em Encyclopedia of Mathematics and Its Applications}.
\newblock Cambridge University Press, 2011.

\bibitem{Dombi:2002}
G.~Dombi.
\newblock Additive properties of certain sets.
\newblock {\em Acta Arith.} {\bf 103} (2002), 137--146.

\bibitem{Erdos:1956}
P.~{Erd\H{o}s}.
\newblock Problems and results in additive number theory.
\newblock In {\em Colloque sur la Th\'eorie des Nombres, Bruxelles, 1955}, pp.
  127--137. Georges Thone and Masson \& Cie, 1956.

\bibitem{Erdos&Sarkozy:1985}
P.~{Erd\H{o}s}, A.~{S\'ark\"ozy}, and V.~T. {S\'os}.
\newblock Problems and results on additive properties of general sequences.
  {I}.
\newblock {\em Pacific J. Math.} {\bf 118} (1985), 347--357.

\bibitem{Erdos&Sarkozy:1986}
P.~{Erd\H{o}s} and A.~{S\'ark\"ozy}.
\newblock Problems and results on additive properties of general sequences.
  {II}.
\newblock {\em Acta Math. Hung.} {\bf 48} (1986), 201--211.

\bibitem{Erdos&Sarkozy&Sos:1985}
P.~{Erd\H{o}s}, A.~{S\'ark\"ozy}, and V.~T. {S\'os}.
\newblock Problems and results on additive properties of general sequences.
  {IV}.
\newblock In {\em Number Theory}, Vol. 1122 of {\em Lecture Notes in
  Mathematics}, pp.  85--104. Springer-Verlag, 1985.

\bibitem{Erdos&Sarkozy&Sos:1986}
P.~{Erd\H{o}s}, A.~{S\'ark\"ozy}, and V.~T. {S\'os}.
\newblock Problems and results on additive properties of general sequences.
  {V}.
\newblock {\em Monatsh. Math.} {\bf 102} (1986), 183--197.

\bibitem{Erdos&Sarkozy&Sos:1987}
P.~{Erd\H{o}s}, A.~{S\'ark\"ozy}, and V.~T. {S\'os}.
\newblock Problems and results on additive properties of general sequences.
  {III}.
\newblock {\em Stud. Sci. Math. Hung.} {\bf 22} (1987), 53--63.

\bibitem{Erdos&Turan:1941}
P.~{Erd\H{o}s} and P.~Tur\'an.
\newblock On a problem of {Sidon} in additive number theory and on some related
  problems.
\newblock {\em J. London Math. Soc.} {\bf 16} (1941), 212--215.
\newblock Addendum, {\bf 19} (1944), 208.

\bibitem{Mousavi:2016}
H.~Mousavi.
\newblock Automatic theorem proving in {{\tt Walnut}}.
\newblock Arxiv preprint arXiv:1603.06017 [cs.FL], available at
  \url{http://arxiv.org/abs/1603.06017}, 2016.

\bibitem{Rajasekaran&Shallit&Smith:2020}
A.~Rajasekaran, J.~Shallit, and T.~Smith.
\newblock Additive number theory via automata theory.
\newblock {\em Theoret. Comput. Sci.} {\bf 64} (2020), 542--567.

\bibitem{Sarkozy:2006}
A.~{S\'ark\"ozy}.
\newblock On the number of additive representations of integers.
\newblock In {\em More Sets, Graphs and Numbers}, Vol.~15 of {\em Bolyai Soc.
  Math. Stud.}, pp.  329--339. Springer-Verlag, 2006.

\bibitem{Shallit:2022}
J.~Shallit.
\newblock {\em The Logical Approach to Automatic Sequences: Exploring
  Combinatorics on Words with {\tt Walnut}}, Vol. 482 of {\em London Math. Soc.
  Lecture Notes Series}.
\newblock Cambridge University Press, 2022.

\end{thebibliography}
\end{document}